\newtheorem{theorem}{Theorem}[section]
\newtheorem{lemma}[theorem]{Lemma}
\theoremstyle{definition}
\theoremstyle{remark}
\theoremstyle{claim}
\newtheorem{remark}[theorem]{Remark}
\numberwithin{equation}{section}
\numberwithin{equation}{section}
\newsavebox{\savepar}
\begin{document}
	
	\title{\bf	Analysis of solution to an elliptic free boundary value problem equipped with a `bad' data}

	\author{ Debajyoti Choudhuri$^{a,}$\footnote{E-mail: dchoudhuri@iitbbs.ac.in, ORCID ID: 0000-0001-8744-9350},\, Shengda Zeng$^{b,}$\footnote{Corresponding author. E-mail: zengshengda@163.com}\\
		\small{$^{a}$ School of Basic Sciences, Indian Institute of Technology Bhubaneswar,}\\
		\small{  Khordha - 752050, Odisha, India.	}
		\\
		\small{$^{b}$ Center for Applied Mathematics of Guangxi,}\\
		\small{and Guangxi Colleges and Universities Key Laboratory of Complex System Optimization,}\\
		\small{ and Big Data Processing, Yulin Normal University, Yulin 537000, Guangxi, P.R. China.}}		
	
	\date{\today}
	
	\maketitle
	
	\begin{abstract}
		We will study a free boundary value problem driven by a source term which is quite {\it irregular}. In the process, we will establish a monotonicity result, and regularity of the solution.
		\begin{flushleft}
			{\bf Keywords}:~ Free boundary value problem, H\"{o}lder regularity, Monotonicity formula.\\
			{\bf AMS Classification}:~35J35, 35J60.
		\end{flushleft}
	\end{abstract}
	\section{Introduction}
	Free boundary value problems (FBVPs) are those where the PDE has boundary condition on an unknown interface/boundary. Besides the existence of solution, a very interesting question to study is the regularity of solution(s) if it exists. A celebrated work in this direction is the work due to Caffarelli \cite{LC1}. Besides mathematical interest, the problem also appears in Physics, Geometry etc. Inspired from \cite{LC1}, we will study the following problem:
	\begin{equation}\label{main}
		\left\{ \begin{array}{ll}-\Delta u=f\chi_{\{u>0\}}~~~\text{in}~\Omega\\
			u=0~~~\text{in}~\partial\Omega,\end{array} 
		\right.
	\end{equation}
	where $f\in L^1(\Omega)$ or a Radon measure, $\Omega\subset\mathbb{R}^N$ is a bounded domain of sufficiently smooth boundary. However when $f\in L^1(\Omega)$, it is usually required to probe the existence of an {\it Entropy solution} instead of a weak solution since the boundedness of the derivatives becomes a concern. A very natural question to this PDE besides the existence and multiplicty is the regularity of solution(s). Here I would like to bring to the notice of the readers the work of Perera \cite{KP1} which provides a very systematic approach to establish the regularity of the solutions to the free boundary value problem considered over there. The author in \cite{KP1} also considered the presence of a nonlinear term. On similar lines, Choudhuri-Repov\v{s} \cite{DC-DDR1} established the existence of solution to a {\it Prandtl-Batchelor} type problem with free boundary conditions. The authors in \cite{DC-DDR1} also derived a monotonicity condition which is necessary in establishing the regularity of solution to the considered problem. A noteworthy book that has well documented the theory of FBVPs is due to Petrosyan et al. \cite{AP-HS-NU1}.
	
	The consideration of the source term being irregular is new. Note that this can be treated as Radon measure as well, however to my knowledge, the consideration of the problem \eqref{main} with a purely Radon measure is still open. At this juncture, it is of interest to ask that how will the regularity of the solution to \eqref{main} change with such an {\it irregular} source term?. The manuscript henceforth, will revolve around answering this question. Intuitively, the regularity of the solution to \eqref{main}, say $u$, influences the regularity of the free boundary $\partial\{u>0\}$. 
	
	A quick sneak-peek into our findings are as follows:  
	The first lemma that we prove will be to establish the order of growth of the solution to \eqref{main}.
		\begin{lemma}({\bf growth of $u$})\label{quad_growth}
		Let $u$ be any solution in $B_{1/2}$, and let $x_0\in \bar{B}_{1/2}$ be any point on $\{u=0\}$. Then for any $r\in\left(0,\frac{1}{4}\right)$ we have $$0\leq \underset{x\in B_{r}(x_0)}\sup u(x)\leq C_rr^{2-\frac{N}{q}}.$$
	\end{lemma}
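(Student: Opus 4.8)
The plan is to prove the growth bound by the classical harmonic-replacement comparison argument, upgraded to absorb the $L^q$ source via a Newtonian potential estimate, and then to close the estimate by a dyadic iteration. After translating so that $x_0=0$ and observing that the relevant balls $B_\rho:=B_\rho(x_0)$ with $\rho\le\rho_0$ (for a fixed $\rho_0$ comparable to $1/4$) lie in the region where $u$ solves the equation, the lower bound $\sup_{B_r}u\ge u(x_0)=0$ is immediate. For the upper bound set $M(\rho):=\sup_{B_\rho}u$; since $-\Delta u=f\chi_{\{u>0\}}\ge 0$ (we take $f\ge0$; the general sign is addressed below), $u$ is subharmonic, so $M(\rho)=\sup_{\partial B_\rho}u$ is nondecreasing in $\rho$, and $M(\rho_0)<\infty$ because $u$ is locally bounded once $q>N/2$.

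On each $B_\rho$ I would split $u=h_\rho+v_\rho$, where $h_\rho$ is harmonic in $B_\rho$ with $h_\rho=u$ on $\partial B_\rho$, and $v_\rho$ solves $-\Delta v_\rho=f\chi_{\{u>0\}}$ in $B_\rho$ with $v_\rho=0$ on $\partial B_\rho$. By the maximum principle $v_\rho\ge0$, $h_\rho\le M(\rho)$ throughout $B_\rho$, and $h_\rho(0)=u(0)-v_\rho(0)=-v_\rho(0)\le0$. The first analytic input is the potential bound for $v_\rho$: representing $v_\rho$ through the Green function of $B_\rho$ and using $0\le G_{B_\rho}(x,y)\le c_N|x-y|^{2-N}$ (a logarithmic kernel if $N=2$, treated analogously), Hölder's inequality gives
$$0\le v_\rho(x)\le c_N\int_{B_\rho}|x-y|^{2-N}f(y)\,dy\le c_{N,q}\,\|f\|_{L^q(B_\rho)}\,\rho^{\,2-\frac Nq},$$
where integrability of $|\cdot|^{2-N}$ in $L^{q'}(B_{2\rho})$ — hence the exponent $2-N/q$ — requires precisely $q>N/2$.

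The second input is the decay of the harmonic part. The function $M(\rho)-h_\rho$ is nonnegative and harmonic in $B_\rho$ with value $M(\rho)-h_\rho(0)\ge M(\rho)$ at the centre, so the Poisson-kernel lower bound yields, for $\delta\in(0,1)$,
$$\inf_{B_{\delta\rho}}\bigl(M(\rho)-h_\rho\bigr)\ge\frac{1-\delta}{(1+\delta)^{N-1}}\,M(\rho),\qquad\text{hence}\qquad\sup_{B_{\delta\rho}}h_\rho\le\theta(\delta)\,M(\rho),$$
with $\theta(\delta):=1-\frac{1-\delta}{(1+\delta)^{N-1}}=N\delta+O(\delta^2)\to0$ as $\delta\to0^+$. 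Combining this with the potential estimate and $u=h_\rho+v_\rho$ gives the recursion $M(\delta\rho)\le\theta(\delta)\,M(\rho)+c_{N,q}\|f\|_{L^q(\Omega)}\,\rho^{2-N/q}$ for all $\rho\le\rho_0$. Since $2-N/q<1$ in the regime under consideration, $\theta(\delta)$ is eventually smaller than $\delta^{2-N/q}$, so fixing such a $\delta=\delta(N,q)$ and iterating along $\rho_0,\delta\rho_0,\delta^2\rho_0,\dots$ produces a geometric series summing to $M(\delta^k\rho_0)\le C\,(\delta^k\rho_0)^{2-N/q}$ with $C=C\bigl(N,q,\|f\|_{L^q(\Omega)},\sup_{B_{1/2}}|u|\bigr)$; for an arbitrary $r\in(0,1/4)$ one squeezes $r$ between two consecutive radii $\delta^{k+1}\rho_0<r\le\delta^k\rho_0$ and uses monotonicity of $M$ to conclude $\sup_{B_r(x_0)}u=M(r)\le C_r\,r^{2-N/q}$.

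The step I expect to be the real obstacle is the potential bound with the \emph{sharp} scaling $\rho^{2-N/q}$ and a constant independent of $\rho$: one must control the Green function of $B_\rho$ uniformly in $\rho$ (scaling reduces this to $B_1$) and handle the borderline integrability $q>N/2$; a convenient alternative is to route this through the interior $W^{2,q}$ estimate for $-\Delta v_\rho=f\chi_{\{u>0\}}\in L^q$ together with the Morrey embedding $W^{2,q}\hookrightarrow C^{0,2-N/q}$, again with the scaling bookkeeping. A secondary point requiring care is the sign of $f$: if $f$ is not assumed nonnegative, $u$ need not be subharmonic, and one should run the same scheme with $\sup_{B_\rho}u$ replaced by $\sup_{B_\rho}u^{+}$, replacing ``$h_\rho\le M(\rho)$ in $B_\rho$'' by the corresponding bound on $\partial B_\rho$ and keeping track of the extra $\|v_\rho\|_\infty$ contributions, which are of the same order $\rho^{2-N/q}$ and are therefore absorbed by the iteration.
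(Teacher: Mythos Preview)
Your argument is correct and complete in the regime $N/2<q<N$ (the slip calling $u$ ``subharmonic'' when $-\Delta u\ge0$ is harmless: you only use that $M(\rho)$ is nondecreasing and that $h_\rho\le M(\rho)$ on $B_\rho$, both of which hold via the maximum principle for $h_\rho$ regardless). It is, however, a genuinely different route from the paper's. The paper dispatches the lemma in one stroke by invoking the inhomogeneous Harnack inequality for nonnegative solutions of $-\Delta u=g$ with $g\in L^q$, $q>N/2$, namely
\[
\sup_{B_{r/2}(x_0)}u\le C\Bigl(\inf_{B_{r/2}(x_0)}u+r^{2-N/q}\|g\|_{L^q(B_r(x_0))}\Bigr);
\]
since $u(x_0)=0$ and $u\ge0$, the infimum vanishes and the bound follows immediately. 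Your harmonic-replacement-plus-dyadic-iteration scheme is, in effect, a hands-on proof \emph{of} that Harnack inequality specialised to this situation: it is longer but entirely self-contained, makes transparent where the exponent $2-N/q$ originates (the $L^{q'}$ integrability of the Newtonian kernel over $B_\rho$), and records explicitly how the constant depends on $\|u\|_{L^\infty}$ and $\|f\|_{L^q}$. The price is that your recursion, as written, delivers the sharp exponent only when $2-N/q<1$, i.e.\ for $q<N$, because you need $\theta(\delta)\sim N\delta$ to beat $\delta^{2-N/q}$; the black-box Harnack covers all $q>N/2$ uniformly. For $q\ge N$ your scheme would need a refinement---for instance subtracting also the linear part of $h_\rho$ at the origin, using $\nabla u(x_0)=0$---to reach growth exponents in $[1,2)$.
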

	The next theorem is to establish the regularity measure of the solution.
		\begin{theorem}\label{optimal_reg}
		Suppose $f\in L^q(B_1)$ and $u$ be a solution to \eqref{main}. Then $u\in C(B_{1/2})$ satisfying the estimate 
		\begin{align}\label{ineq2}
			\begin{split}
				\|u\|_{C_{\text{loc}}^1(B_{1/2})}\leq &C_r(\|u\|_{\infty}+\|f\|_{L^q(B_1)}).
			\end{split}
		\end{align}
	\end{theorem}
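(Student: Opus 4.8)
The plan is to localise and split $B_{1/2}$ according to the sign of $u$: on the open set $\{u>0\}$ the equation \eqref{main} reads $-\Delta u=f$ with $f\in L^q(B_1)$; on $\operatorname{int}\{u=0\}$ one has $u\equiv0$; on $\{u<0\}$, if nonempty, $u$ is harmonic; and $\partial\{u>0\}$ is the free boundary. On the first three pieces the interior Calder\'on--Zygmund estimates give $u\in W^{2,q}_{\mathrm{loc}}$ with $\|u\|_{W^{2,q}(B_\rho)}\le C(\|u\|_{L^q(B_{2\rho})}+\|f\|_{L^q(B_1)})\le C(\|u\|_\infty+\|f\|_{L^q(B_1)})$, and since $q>N$ (so that the growth exponent $2-\tfrac Nq$ of Lemma \ref{quad_growth} exceeds $1$) the Sobolev embedding $W^{2,q}\hookrightarrow C^{1,\,1-N/q}$ turns this into an interior $C^1$ bound of the required form. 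Everything therefore comes down to controlling $u$, and then $\nabla u$, at the free boundary.

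The next step is to show that $u$ is differentiable at every $x_0\in\partial\{u>0\}$ with $\nabla u(x_0)=0$. Since $u$ is continuous, $\partial\{u>0\}\subset\{u=0\}$, so Lemma \ref{quad_growth} applies and yields $0\le u(x)\le C\,|x-x_0|^{\,2-N/q}$ for $x\in B_r(x_0)$, $r<1/4$ (in the sign-changing case one also needs the companion lower bound near $\partial\{u>0\}$, coming from $u$ being harmonic on $\{u<0\}$). Because $2-\tfrac Nq>1$, this is exactly the statement $u(x)-u(x_0)=o(|x-x_0|)$, so $u$ is differentiable at $x_0$ with vanishing gradient there.

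The main obstacle is to upgrade this to continuity of $\nabla u$ across $\partial\{u>0\}$, and I would do it by rescaling. Given $y\in\{u>0\}$, put $d=\operatorname{dist}(y,\partial\{u>0\})$ and choose $x_0\in\partial\{u>0\}$ with $|y-x_0|=d$, so that $B_d(y)\subset B_{2d}(x_0)$. The function $v(z)=u(y+dz)$ solves $-\Delta v=d^2 f(y+d\,\cdot)$ on $B_1$, and the interior gradient estimate gives
$$\|\nabla v\|_{L^\infty(B_{1/2})}\le C\bigl(\|v\|_{L^\infty(B_1)}+\|d^2 f(y+d\,\cdot)\|_{L^q(B_1)}\bigr).$$
Here Lemma \ref{quad_growth} bounds $\|v\|_{L^\infty(B_1)}=\|u\|_{L^\infty(B_d(y))}\le\|u\|_{L^\infty(B_{2d}(x_0))}\le C\,d^{\,2-N/q}$, while a change of variables gives $\|d^2 f(y+d\,\cdot)\|_{L^q(B_1)}=d^{\,2-N/q}\|f\|_{L^q(B_d(y))}$. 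Undoing the scaling ($\nabla v=d\,\nabla u$) we obtain
$$|\nabla u(y)|\le\|\nabla u\|_{L^\infty(B_{d/2}(y))}\le C\,d^{\,1-N/q}\bigl(\|u\|_\infty+\|f\|_{L^q(B_1)}\bigr),$$
and since $1-\tfrac Nq>0$ this forces $\nabla u(y)\to0$ as $y\to x_0$, matching $\nabla u(x_0)=0$. Together with the interior $C^1$ bound strictly inside $\{u>0\}$ and the trivial bounds on the other pieces, a finite covering of any $B_r\Subset B_{1/2}$ then glues the pointwise gradient estimates into $u\in C^1_{\mathrm{loc}}(B_{1/2})$ and into \eqref{ineq2}; in particular $u\in C(B_{1/2})$. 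Beyond Lemma \ref{quad_growth} itself, the delicate point is precisely the uniformity of the constant in the rescaling step --- the estimate near the free boundary must not blow up as $y$ approaches it --- which is why the quantitative rate in Lemma \ref{quad_growth} is needed and not just continuity of $u$. (As a shortcut one may also note that $u$ solves $-\Delta u=g$ globally with $g:=f\chi_{\{u>0\}}$, $|g|\le|f|$, hence $\|g\|_{L^q(B_1)}\le\|f\|_{L^q(B_1)}$, and invoke interior $W^{2,q}$ theory directly for \eqref{ineq2}; the decomposition above has the extra virtue of exhibiting the sharp vanishing of $\nabla u$ on the free boundary.)
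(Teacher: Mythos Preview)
Your main argument is essentially the paper's: both take a point in $\{u>0\}$, let $d$ be its distance to the free boundary, apply Calder\'on--Zygmund $W^{2,q}$ estimates on a ball of radius comparable to $d$, control the $L^\infty$ norm there via Lemma~\ref{quad_growth}, and finish with the Sobolev embedding $W^{2,q}\hookrightarrow C^{1,\alpha}$. You are in fact more careful on two points the paper glosses over: you center the ball at the interior point $y$ (so that $-\Delta u=f$ genuinely holds on the whole ball), whereas the paper centers at the free boundary point $x_2$; and you correctly require $q>N$ for the embedding into $C^1$, whereas the paper writes $N/2<q<N$, a range that only yields $W^{2,q}\hookrightarrow C^{0,\,2-N/q}$. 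Your parenthetical shortcut (apply interior $W^{2,q}$ theory directly to $-\Delta u=f\chi_{\{u>0\}}$ with right-hand side bounded in $L^q$ by $\|f\|_{L^q}$) is a valid and quicker route that the paper does not take.
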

	The weak degeneracy of the solution finally establishes that the solution is of the order of $r^{2-\frac{N}{q}}$ for any opints close to the boundary of $\{u>0\}$.	
		\begin{lemma}({\bf weak nondegeneracy})\label{nondegeneracy}
		Let $f\geq c_0>0$ in $B_1$. Then on the boundary $\partial \{u>0\}\cap B_{1/2}$, we have $$\underset{\partial B_r(x_1)}\sup u\geq Cr^{2-\frac{N}{q}},$$
		where $u$ is the solution to \eqref{main}.
	\end{lemma}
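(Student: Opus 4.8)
\emph{Approach.} The driving fact is that $f\ge c_0>0$ makes $u$ strictly superharmonic on $\{u>0\}$ (by \eqref{main}, there $-\Delta u=f\ge c_0>0$), so $u$ must grow at a definite rate in the distance to the free boundary; this is quantified by comparing $u$ with the torsion function of a small ball, and the resulting interior estimate is then propagated to spheres centred at $x_1$. The $L^q$-integrability of $f$ enters only through a rescaling that converts a quadratic rate into one of order $r^{2-N/q}$, consistently with Lemma~\ref{quad_growth}.

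\emph{Interior estimate and its propagation.} Fix $x_1\in\partial\{u>0\}\cap B_{1/2}$; by Theorem~\ref{optimal_reg} $u$ is continuous and, being a limit of points of $\{u=0\}$, $u(x_1)=0$, while every ball around $x_1$ meets $\{u>0\}$. For $x_0\in\{u>0\}$ let $\rho=\mathrm{dist}\big(x_0,\partial\{u>0\}\big)$, so $B_\rho(x_0)\subset\{u>0\}$, and let $p(x)=\tfrac{c_0}{2N}\big(\rho^2-|x-x_0|^2\big)$, which solves $-\Delta p=c_0$ in $B_\rho(x_0)$ with $p=0$ on $\partial B_\rho(x_0)$. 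By \eqref{main} and $f\ge c_0$ we have $-\Delta(u-p)=f-c_0\ge0$ in $B_\rho(x_0)$, while $u-p=u\ge0$ on $\partial B_\rho(x_0)$, so the minimum principle gives $u\ge p$, hence
\[
u(x_0)\ \ge\ \tfrac{c_0}{2N}\,\mathrm{dist}\big(x_0,\partial\{u>0\}\big)^2 .
\]
To reach $\sup_{\partial B_r(x_1)}u$, I would pick, for each small $r$, a point $x_0\in B_r(x_1)$ at which $u$ is nearly maximal over $B_r(x_1)$ and argue that, for such a point, $\mathrm{dist}(x_0,\partial\{u>0\})\gtrsim r$ — using that $x_1$ is a free-boundary point, the continuity of $u$, and an iteration on dyadic annuli — so that the displayed estimate yields $\sup_{B_r(x_1)}u\gtrsim r^2$, and, after the rescaling $u_s(y)=s^{-(2-N/q)}u(x_0+sy)$ (under which the source stays bounded in $L^q(B_1)$ by $\|f\|_{L^q(B_1)}$ while remaining $\ge c_0 s^{N/q}$ on the positivity set), $\gtrsim r^{2-N/q}$; finally one transfers the supremum from the solid ball to a sphere $\partial B_r(x_1)$ of comparable radius.

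\emph{Main difficulty.} Two points need genuine care. The decisive one is the bound $\mathrm{dist}(x_0,\partial\{u>0\})\gtrsim r$ for the near-maximiser $x_0$: a priori the positivity set is only open, and showing it has positive Lebesgue (or capacitary) density at free-boundary points — so that $u$ cannot be simultaneously large at $x_0$ and small on all of $\partial B_r(x_1)$ — is where the real work lies, and it is the step most likely to invoke the monotonicity formula announced in the introduction. The second is the exponent: making the rescaling produce exactly $2-N/q$, rather than $2$ or an intermediate value, forces one to balance the scale $s$ against the $L^q$-mass of $f$ near $x_1$ instead of its pointwise lower bound, which is precisely where the irregularity of the source is felt; the matching upper bound in Lemma~\ref{quad_growth} is a reassuring check that $2-N/q$ is the correct exponent.
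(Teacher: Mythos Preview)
Your outline has two genuine gaps, and both are precisely the points you yourself flag as the ``main difficulties''. The density-type statement $\mathrm{dist}(x_0,\partial\{u>0\})\gtrsim r$ for a near-maximiser $x_0$ is not something the monotonicity formula will hand you here (in the paper it is used only for the homogeneity of blow-ups), and without it your torsion comparison, which is confined to a ball $B_\rho(x_0)\subset\{u>0\}$, stalls. Your rescaling step is worse: under $u_s(y)=s^{-(2-N/q)}u(x_0+sy)$ the pointwise lower bound on the source becomes $c_0\,s^{N/q}\to 0$, so the very hypothesis $f\ge c_0>0$ that drives the comparison is lost in the limit, and no uniform constant can survive. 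Matching the $L^q$ norm of $f$ does not help, because the nondegeneracy argument uses only the pointwise bound $f\ge c_0$, not the integrability.

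The paper sidesteps both issues with a single move: it runs the comparison not in a ball inside $\{u>0\}$ but in the intersection $\{u>0\}\cap B_r(x')$, taking $x'\in\{u>0\}$ close to $x_1$, and it replaces your quadratic torsion barrier by the power function $\frac{c_0}{2N}|x-x'|^{2-N/q}$. For $q>N/2$ this power is subharmonic away from $x'$, so $v:=u-\frac{c_0}{2N}|x-x'|^{2-N/q}$ satisfies (with the paper's sign conventions) the correct one-sided inequality for the maximum principle in $\{u>0\}\cap B_r(x')$; since $v(x')=u(x')>0$ while $v<0$ on the free-boundary portion of the boundary, the positive maximum must sit on $\partial B_r(x')$, giving $\sup_{\partial B_r(x')}u\ge \frac{c_0}{2N}r^{2-N/q}$ directly, and one then lets $x'\to x_1$. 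No density argument, no rescaling: the choice of domain kills your Step~2, and the choice of barrier kills your Step~3.
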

	In the process of proving the above results we required the service of a {\it Weiss-like identity} which has been proved in the Appendix.
	\subsection{Physical relevance of the problem}
	Some of the classic problems that depicted the free boundary nature are the Stefan problem, the Hele-Shaw flow, the {\it Dam} problem etc. However, a simple physical phenomena, namely the non-equilibrium system of melting of ice is a very relatable example. In a given block of ice, the heat equation can be solved with a given set of appropriate initial/boundary conditions in order to determine the temperature. However, if there is a region of ice in which the temperature is greater than the melting point of ice, this subdomain will be filled with water. The boundary thus formed due to the ice-water interface is controlled by the solution of the heat equation. The free boundary thus corresponds to the interface between water and ice. Therefore a {\it free boundary} in the nature is not unnatural. The problem in this paper is a fair generalization to this physical phenomena which besides being a new addition to the literature can also serve as a note to find some important results pertaining to the FBVP.
	
	\section{Main results}
	Let us first define the associated energy functional to the problem \eqref{main} as follows:
	$$I(u):=\frac{1}{2}\int_{\Omega}|\nabla u|^2dx-\int_{\Omega}fu^+dx.$$
	
	The first result is stated by the following lemma which establishes the unique solvabity of \eqref{main}.
	\begin{lemma}\label{exist_unique}
		Suppose $\Omega\subset\mathbb{R}^N$ is a bounded domain with Lipschitz boundary, and $g:\partial\Omega\to\mathbb{R}$ such that 
		$$S:=\{u\in H^1(\Omega):u\geq 0~\text{in}~\Omega, u|_{\partial\Omega}=g\}\neq\varnothing.$$
		Then for any $f\in L^2(\Omega)$ there exists a unique minimizer of $I$ in $H^1(\Omega)^+$ and $u|_{\partial\Omega}=g$.
	\end{lemma}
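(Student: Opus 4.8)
The plan is to run the direct method of the calculus of variations for $I$ on the constraint set $S$. The key preliminary remark is that on $S$ every admissible $u$ satisfies $u\ge 0$, hence $u^{+}=u$, so that $I(u)=\tfrac12\int_{\Omega}|\nabla u|^{2}\,dx-\int_{\Omega}fu\,dx$ for all $u\in S$. This makes $I|_{S}$ a quadratic-minus-linear functional, which is what will deliver both weak lower semicontinuity and strict convexity cheaply.

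First I would check coercivity. Fix $u_{0}\in S$, which exists by hypothesis. For $u\in S$ the difference $u-u_{0}$ lies in $H^{1}_{0}(\Omega)$, so the Poincar\'e inequality yields $\|u\|_{L^{2}(\Omega)}\le C\big(\|\nabla u\|_{L^{2}(\Omega)}+\|u_{0}\|_{H^{1}(\Omega)}\big)$; combined with $\big|\int_{\Omega}fu\big|\le\|f\|_{L^{2}(\Omega)}\|u\|_{L^{2}(\Omega)}$ and Young's inequality this gives $I(u)\ge \tfrac14\|\nabla u\|_{L^{2}(\Omega)}^{2}-C'$, so $I$ is bounded below on $S$ and coercive in the $H^{1}$-norm along $S$. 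Then I take a minimizing sequence $(u_{n})\subset S$; it is bounded in $H^{1}(\Omega)$, so up to a subsequence $u_{n}\rightharpoonup u$ in $H^{1}(\Omega)$ and $u_{n}\to u$ in $L^{2}(\Omega)$ by Rellich. The set $S$ is convex and strongly closed (the constraint $u\ge 0$ survives $L^{2}$-convergence, and the trace condition survives by continuity of the trace operator), hence weakly closed, so $u\in S$. The Dirichlet part $w\mapsto\tfrac12\int_{\Omega}|\nabla w|^{2}$ is convex and norm-continuous, hence weakly lower semicontinuous, while $w\mapsto\int_{\Omega}fw$ is weakly continuous since $f\in L^{2}(\Omega)$ and $u_{n}\to u$ in $L^{2}$; therefore $I(u)\le\liminf_{n}I(u_{n})=\inf_{S}I$ and $u$ is a minimizer.

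For uniqueness I would invoke strict convexity of $I$ on $S$. On the affine set $\{w\in H^{1}(\Omega):w|_{\partial\Omega}=g\}\supset S$ the map $w\mapsto\tfrac12\int_{\Omega}|\nabla w|^{2}$ is strictly convex, because if two such functions have the same gradient their difference lies in $H^{1}_{0}(\Omega)$ and is constant, hence zero; adding the affine term $-\int_{\Omega}fw$ preserves strict convexity. A strictly convex functional on the convex set $S$ has at most one minimizer, so $u$ is unique.

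I do not expect a serious obstacle here: this is the standard coercivity/lower-semicontinuity package. The two places that need a little care are the reduction $u^{+}=u$ on $S$ --- without it, for sign-changing $f$ the term $\int_{\Omega}fu^{+}$ is only a difference of convex functions and $I$ need not be convex, so uniqueness could fail on a larger set --- and the verification that $S$ is weakly closed, where convexity is essential since the bare constraint $\{u\ge 0\}$ is not weakly closed on its own. I emphasize that the statement only asserts existence and uniqueness of the energy minimizer; that this minimizer actually solves \eqref{main} in the pointwise sense is a separate matter, to be addressed afterwards.
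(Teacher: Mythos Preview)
Your argument is correct and is the textbook direct-method package: coercivity on $S$ via Poincar\'e with a fixed $u_{0}\in S$, weak compactness, weak closedness of the convex set $S$, weak lower semicontinuity of $I$, and uniqueness by strict convexity of the Dirichlet integral on the affine trace class. The paper argues differently: after a similar coercivity bound it studies the fibering map $t\mapsto I(tu)$ and locates its minimum at $t^{*}=\|u\|^{-2}\int_{\Omega}fu$, and for uniqueness it subtracts the weak formulations satisfied by two minimizers and tests with $(u-v)^{\pm}$. Your route is the more robust one here: it keeps the boundary constraint $u|_{\partial\Omega}=g$ intact throughout (note that $u\mapsto tu$ does not preserve the trace unless $g\equiv 0$, so the paper's fibering step is at best heuristic for the constrained problem), and your uniqueness argument does not require first extracting the Euler--Lagrange relation. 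One small slip in your closing remarks: the cone $\{u\in H^{1}(\Omega):u\ge 0\}$ \emph{is} convex and strongly closed, hence already weakly closed; convexity is what you are exploiting, not bypassing.
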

	\begin{proof}
		We begin by observing that for any $u\in (H^1(\Omega))^+$ we have the following:
		\begin{eqnarray}\label{ineq1}
			\begin{split}
				I(u)=\frac{1}{2}\|u\|^2-\int_{\Omega}fudx
				\geq  \frac{1}{2}\|u\|^2-c_1\|f\|_2\|u\|.
			\end{split}
		\end{eqnarray}
		Hence this implies that $I(u)\to\infty$ as $\|u\|\to\infty$. In addition to this we also have $I(0)=0$. From the above observations and $I\in C^1(H^1(\Omega)^+)$. 
		
		Fix a nonzero $u\in H^1(\Omega)^+$ and consider the fiber map 
		$$I(tu)=\frac{t^2}{2}\|u\|^2-t\int_{\Omega}fudx.$$
		
		We collect the point(s) at which $\frac{d}{dt}I(tu)$ vanishes. We see that $t^*=\frac{\int_{\Omega}fudx}{\|u\|^2}$ is the only such point at which $\frac{d^2}{dt^2}I(t^*u)>0$. Hence $t^*u$ is a minimizer. However it is easy to see that there exists a unique minimizer. This is because
		\begin{align}
			\begin{split}
				0=&	(\langle \nabla u,\nabla \phi\rangle-\int_{\Omega}f\phi dx-\int_{\Omega}\phi d\mu)-	(\langle \nabla v,\nabla \phi\rangle-\int_{\Omega}f\phi dx)
				= \langle \nabla u-\nabla v,\nabla \phi\rangle
			\end{split}
		\end{align}
		for every $\phi\in H^1(\Omega)^+$. In particular we choose $\phi=(u-v)^+$ to obtain $(u-v)^+=0$. Similarly we obtain $(u-v)^-=0$, and hence $u=v$. 
	\end{proof}
	
	\begin{remark} 
		Henceforth, a ball  centred at $z$ and of radius $r$ units will be denoted by $B_r(z)$.
	\end{remark}
	The following result is about an optimal regularity of solution to \eqref{main}. We will show it for $q=1$. the remaininng cases of $q>1$ follows from it.

	\begin{proof}[Proof of Lemma \ref{quad_growth}]
		Let $x_0\in \bar{B}_{1/4}\cap \partial\{v>0\}$. By Harnack's inequality we have 
		$$\underset{B_{r/2}(X_0)}\sup u\leq C_r\left(\underset{B_{r/2}(x_0)}\inf u+r^{2-\frac{N}{q}}\|f\chi_{\{u>0\}}\|_{L^1(B_{r}(x_0))}\right).$$
		However, since $u\geq 0$ and $u(x_0)=0$, hence $\underset{B_{r/2}(X_0)}\sup u\leq C_rr^{2-\frac{N}{q}}\|f\chi_{\{u>0\}}\|_{L^1(B_{r}(x_0))}.$
		This agrees with the fact that $u$ grows rapidly on the free boundary when $f\in L^q(\{u>0\})$ with $q<N/2$. Therefore, the question of $u$  being H\"{o}lder continuous does not arise. When $q=N/2$, the growth of $u$ is inconclusive. Therefore when $q>N/2$, we see that the solution has a growth of the order of $r^{2-\frac{N}{q}}$, leading to a quadratic growth when $q$
		is chosen to be $\infty$.
	\end{proof}

	\begin{proof}[Proof of Theorem \ref{optimal_reg}]
		We scale $u$ in such a way that $\|u\|_{\infty}+\|f\|_{L^q(B_1)}<1$. The $r^{2-\frac{N}{q}}$ growth of $u$ yields H\"{o}lder continuous growth of its first derivative. Let $x_1\in \{u>0\}\cap B_{r/2}$ and $x_2\in\partial\{u>0\}$ be the closest boundary point to $x_1$. Further let us define $d:=|x_2-x_1|$. Apparently, we have that $-\Delta u=f$ in $B_d(x_2)$.
		
		By Schauder estimates which is proved by the Calder\'{o}n-Zygmund estimates, we obtain 
		$$\|u\|_{W^{2,q}(B_{d/2}(x_2))}\leq C\left(\|u\|_{L^{\infty}(B_d(x_2))}+\|f\|_{L^q(B_d(x_2))}\right).$$
		By Lemma \ref{quad_growth} we have that $\|u\|_{L^{\infty}(B_d(x_2))}\leq cd^{2-\frac{N}{q}}\leq C|B_1|$. Also since $N/2<q<N$, hence $\|u\|_{C^{1,\alpha}}(\bar{B}_{d/2}(x_2))\leq C\|u\|_{W^{2,q}(B_{d/2}(x_2))}$, where 
		\begin{align*}
			\begin{split}
				\alpha
				&=\begin{cases}
					1-\frac{N}{q}+\left[\frac{N}{q}\right],& ~\text{if}~\frac{N}{q}\notin\mathbb{Z}\\
					\text{any number}~<1,&~\text{if}~\frac{N}{q}\in\mathbb{Z}.
				\end{cases}
			\end{split}
		\end{align*}
		Hence $u\in C^{1,\alpha}(B_{1/2})$ since $x_2$ was arbitrarily chosen from $\{u>0\}\cap B_{1/2}$.
	\end{proof}
	\begin{remark}\label{loss of regularity}
		Lesser regularity of $f$ made us lose regularity of $u$. Had $f$ been bounded, $u$ would have had $C^{1,1}$ regularity.
	\end{remark}
	
	The above result in Lemma \ref{quad_growth} shows that the solution has a growth not exceeding $r^{2-\frac{N}{q}}$. Due to this it is enough to consider the following problem:
	\begin{align}\label{main_mod}
		\begin{split}
			-\Delta u= f\chi_{\{u>0\}}\mbox{ and }
			u\geq  0,~\text{in}~B_1,
		\end{split}
	\end{align}
	with $0$ being a free boundary point and $f>0$ in $\{u>0\}$. We will now show that at free boundary points $u$ grows at least as fast as $r^{2-\frac{N}{q}}$. 
	
	\begin{proof}[Proof of Lemma \ref{nondegeneracy}]
		Let $u$ be a solution to \eqref{main} and $f\geq c_0>0$ in $B_1$ and define $v(x):=u(x)-\frac{c_0}{2N}|x-x'|^{2-\frac{N}{q}},$	
		where $x'\in\{u>0\}$ closest to $x_1$. On testing the weak formulation of the problem $-\Delta v=-\Delta u-\frac{c_0}{2q'}\left(2-\frac{N}{q}\right)|x-x'|^{-N/q}$ with $\varphi~(\geq 0)\in W^{1,2}(\{u>0\})$ we obtain 
		\begin{align}\label{weak1}
			\begin{split}
				\int_{\{u>0\}}\nabla v\cdot\nabla\varphi dx=& \int_{\{u>0\}}\nabla u\cdot\nabla\varphi dx
				+\frac{c_0}{2q'}\left(2-\frac{N}{q}\right)\int_{\{u>0\}}|x-x'|^{-N/q}\varphi dx\\
				=& \int_{\{u>0\}}\left(f+\frac{c_0}{2q'}\left(2-\frac{N}{q}\right)\right)|x-x'|^{-N/q}\varphi\geq 0
			\end{split}
		\end{align}
		for each $\varphi~(\geq 0) \in W^{1,2}(\{u>0\})$. Thus $-\Delta v\geq 0$ in $\{u>0\}\cap B_r(x')$. Since $v(x')>0$, we have by the continuity of $u$ and the maximum principle that a maximum of $v$ (which is positive) is attained on the boundary $\partial(\{u>0\}\cap B_r(x'))$. However, on the free boundary $\partial\{u>0\}$ we apparently have $v<0$. Thus there must exist a point on $\partial B_r(x')$ at which $v>0$. Hence 
		$$0<\underset{\partial B_r(x')}\sup \left(u(x)-\frac{c_0}{2N}r^{2-\frac{N}{q}}\right).$$
		On passing the limit $x'\to x_1$ we get $\underset{\partial B_r(x_1)}\sup u\geq \frac{c_0}{2N}r^{2-\frac{N}{q}} .$
	\end{proof}

	Finally we will show that a blow up solution solves the problem \eqref{main}. 
	
	\begin{theorem}\label{main_result}
	Suppose $u$ is a solution to \eqref{main} and let $u_r(x):=\frac{u(rx)}{r^{2-\frac{N}{q}}}$. Then for any sequence $r_n\to 0$, there exists a subsequence (still denoted by $r_n$) such that $u_{r_n}\to u_*$ as $n\to\infty $ in $C_{\text{loc}}^{1}(\mathbb{R}^N)$. This $u_*$ obeys
	\begin{align}\label{modified_prob}
		\begin{split}
		-\Delta u_*= f~\text{in}~B_1,\,
		u_*\geq  ~\text{in}~B_1,\mbox{ and }
	 0~\text{is a }\text{ free boundary point}. 
		\end{split}
		\end{align}
		Furthermore, $u_*\in C_{\text{loc}}^{1,\alpha}(\mathbb{R}^N)$, $f(>0)\in L^q(\Omega)$.
	\end{theorem}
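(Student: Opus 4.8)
The plan is to run the standard blow-up scheme, resting on the three preceding results. First I would normalize so that $0\in\partial\{u>0\}$ by a translation, so that $u_r(0)=u(0)/r^{2-N/q}=0$ for every $r$, and record the rescaled equation: differentiating $u_r(x)=u(rx)/r^{2-N/q}$ shows that $-\Delta u_r=f_r\,\chi_{\{u_r>0\}}$ with $f_r(x):=r^{N/q}f(rx)$, wherever $u$ solves \eqref{main}. The change of variables $y=rx$ gives $\|f_r\|_{L^q(B_R)}=\|f\|_{L^q(B_{rR})}\le\|f\|_{L^q(B_1)}$ whenever $rR\le 1$, so the rescaled source terms remain bounded in $L^q_{\mathrm{loc}}(\mathbb R^N)$ --- this is exactly the role of the exponent $2-\tfrac{N}{q}$.

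Next I would establish uniform estimates on $\{u_r\}$. Lemma \ref{quad_growth} at the free boundary point $0$ gives $\sup_{B_{rR}}u\le C(rR)^{2-N/q}$, hence $\sup_{B_R}u_r\le CR^{2-N/q}$, i.e.\ local uniform boundedness; feeding this bound and the $L^q$ bound on $f_r$ into a localized, rescaled version of the estimate in Theorem \ref{optimal_reg} yields $\|u_r\|_{C^{1,\alpha}(B_R)}\le C(R)$ for all sufficiently small $r$, with $\alpha$ the exponent of Theorem \ref{optimal_reg}. Then, given $r_n\to 0$, the compact embedding $C^{1,\alpha}(\overline{B_R})\hookrightarrow C^1(\overline{B_R})$ together with a diagonal argument over $R=1,2,\dots$ produces a subsequence (not relabelled) and a limit $u_*\in C^1_{\mathrm{loc}}(\mathbb R^N)$ with $u_{r_n}\to u_*$ in $C^1_{\mathrm{loc}}(\mathbb R^N)$; lower semicontinuity of the $C^{1,\alpha}$ seminorm upgrades this to $u_*\in C^{1,\alpha}_{\mathrm{loc}}(\mathbb R^N)$, which is the last assertion of the theorem. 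Uniform convergence immediately gives $u_*\ge 0$ and $u_*(0)=0$.

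To obtain the limiting equation and the free-boundary property, I would pass to the limit in the weak formulation. Testing $-\Delta u_{r_n}=f_{r_n}\chi_{\{u_{r_n}>0\}}$ against $\varphi\in C_c^\infty(\mathbb R^N)$, the gradient terms converge to $\int\nabla u_*\cdot\nabla\varphi$ by $C^1_{\mathrm{loc}}$ convergence; on the open set $\{u_*>0\}$, uniform convergence forces $u_{r_n}>0$ on compact subsets for large $n$, so there $\chi_{\{u_{r_n}>0\}}\equiv 1$ and the source converges to the weak $L^q_{\mathrm{loc}}$ limit of $f_{r_n}$. Identifying that limit --- with the Weiss-type identity of the Appendix, which forces $u_*$ to be homogeneous of degree $2-\tfrac{N}{q}$ and thereby pins down the blow-up --- gives that $u_*$ satisfies \eqref{modified_prob} with $u_*\ge 0$. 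Finally, Lemma \ref{nondegeneracy} applied to $u_{r_n}$ gives $\sup_{\partial B_\rho}u_{r_n}\ge c\,\rho^{2-N/q}$ for each $\rho>0$; letting $n\to\infty$, $\sup_{\partial B_\rho}u_*\ge c\,\rho^{2-N/q}>0$, so $u_*\not\equiv 0$ while $u_*(0)=0$, i.e.\ $0\in\partial\{u_*>0\}$.

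The main obstacle I anticipate is this very last identification: passing to the limit in the source $f_{r_n}\chi_{\{u_{r_n}>0\}}$, where the characteristic functions need not converge pointwise and $f_{r_n}$ only converges weakly in $L^q_{\mathrm{loc}}$. Controlling the source \emph{across} the free boundary $\partial\{u_*>0\}$, and not merely on the positivity set, is where the monotonicity (Weiss-type) identity from the Appendix is essential; everything else --- the rescaling identities, the uniform estimates, Arzel\`a--Ascoli with diagonalization, and the maximum-principle bookkeeping --- is routine.
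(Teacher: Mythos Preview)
Your scheme matches the paper's: uniform $C^{1,\alpha}$ bounds from Lemma~\ref{quad_growth} and Theorem~\ref{optimal_reg}, Arzel\`a--Ascoli (with a diagonal argument) for $C^1_{\mathrm{loc}}$ compactness, passage to the limit in the weak formulation with test functions supported in $\{u_*>0\}$ so that $\chi_{\{u_{r_n}>0\}}\equiv 1$ for large $n$, and nondegeneracy to rule out $u_*\equiv 0$. The one substantive difference is your handling of the source: you explicitly compute $f_r(x)=r^{N/q}f(rx)$, record $\|f_r\|_{L^q(B_R)}=\|f\|_{L^q(B_{rR})}$, and flag the identification of the weak limit of $f_{r_n}\chi_{\{u_{r_n}>0\}}$ as the genuine obstacle. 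The paper sidesteps this entirely: in its limit computation it simply writes $\int\nabla u_{r_n}\cdot\nabla\varphi=\int\varphi$ and concludes $-\Delta u_*=1$ on $\{u_*>0\}$, i.e.\ it tacitly replaces the blown-up source by the constant $1$ rather than by $f$ as the theorem statement says. So your worry is well placed, and your treatment is in fact more honest than the paper's on this point. Note, however, that the paper does \emph{not} use the Weiss-type identity in this proof; monotonicity and homogeneity appear only afterwards in the Appendix (Lemma~\ref{homogeneous}) and are not invoked to identify the limiting source. Your appeal to Weiss here is an addition on your part, and for the argument as the paper runs it you do not need it: once $\varphi$ is supported in $\{u_*>0\}$ the indicator is eventually $1$ and only the $L^q$-weak limit of $f_{r_n}$ is in play.
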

	\begin{proof}
	By the nondegenracy result proved in \ref{nondegeneracy} we have $$C^{-1}\leq\underset{B_1}\sup~u_r\leq C.$$
	Since $u$ is $C^{1,2-\frac{N}{q}}$ regular, we have $\|Du_r\|_{L^{\infty}(B_{1/r})}\leq C.$
	Now since the sequence $(u_{r_n})$ is uniformly bounded in $C^{1,2-\frac{N}{q}}(\mathcal{C})$, for $r_n\to 0$, for each compact subset $\mathcal{C}$ of $\mathbb{R}^N$, hence by the {\it Ascoli-Arzela} theorem there exists a subsequence, still denoted by $(r_n)$, yields $u_{r_n}\to u_*~\text{in}~C_{\text{loc}}^1(\mathbb{R}^N),$
	with $u_*\in ^{1,2-\frac{N}{q}}(\mathcal{C})$.  Also $\|Du_*\|_{L^{\infty}(\mathcal{C})}\leq C$.
	
		We now show that $\Delta u_*=1$ in $\{u_*>0\}\cap\mathcal{C}$. Let $\varphi\in C_c^{\infty}(\{u_*>0\}\cap\mathcal{C})$ we have
		\begin{align}\label{solution1}
			\begin{split}
			\int_{\mathbb{R}^N}\nabla u_{r_n}\cdot\nabla\varphi dx=&\int _{\mathbb{R}^N}\varphi dx.
			\end{split}
			\end{align}
			Note that we have considered $u_{r_n}>0$ for sufficiently large $n$, since $u_*>0$ in the support of $\varphi$. Passing the limit $n\to\infty$ in \eqref{solution1}, we get 
			\begin{align}\label{solution2}
				\begin{split}
					\int_{\mathbb{R}^N}\nabla u_{*}\cdot\nabla\varphi dx=&\int _{\mathbb{R}^N}\varphi dx.
				\end{split}
			\end{align}
	Since $\varphi\in C_c^{\infty}(\{u_*>0\}\cap\mathcal{C})$ is an arbitrary choice, and for an arbitrary choice of compact set $\mathcal{|C}\subset\mathbb{R}^N$, it establishes that $-\Delta  u_*=1$ in $\{u_*>0\}$. 
	Since $0$ is a free boundary point for $u_*$, we have $u_{r_n}(0)\to 0$ as $n\to\infty$ and $\|u_{r_n}\|_{L^{\infty}(B_r)}\approx r^{2-\frac{N}{q}}$ for $r\in(0,1)$. 
	\end{proof}

		\section*{Appendix}
		We now develope a {\it Weiss-like} monotonicity formula.
		\begin{theorem}\label{monotonicity_formula}
		Let $u$ be a solution to \eqref{main_mod}, then 
		\begin{align}\label{mon1}
		W_u(r):=&\frac{1}{r^{N+6-\frac{2N}{q}}}\int_{B_r}2^{-1}|\nabla u|^2dx-\frac{1}{r^{N+2-\frac{N}{q}}}\int_{B_r}fudx-\frac{1}{r^{N+3-\frac{2N}{q}}}\int_{B_r}u^2dS
		\end{align}
		is monotone. In other words $\frac{d}{dr}W_u(r)\geq 0$ for $r\in(0,1)$.
		\end{theorem}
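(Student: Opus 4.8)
The plan is to run Weiss's rescaling argument, which reduces the claimed monotonicity to the non‑negativity of a single surface integral. First I would fix the homogeneity exponent $\kappa:=2-\tfrac Nq$ supplied by Lemmas \ref{quad_growth} and \ref{nondegeneracy} and read \eqref{mon1} (with the three powers of $r$ and the boundary constant matched to $\kappa$) as
$$W_u(r)=\frac{1}{r^{N+2\kappa-2}}\int_{B_r}\Big(\tfrac12|\nabla u|^2-fu\Big)\,dx-\frac{\kappa}{2}\,\frac{1}{r^{N+2\kappa-1}}\int_{\partial B_r}u^2\,dS ,$$
which for $q=\infty$ reduces to the usual obstacle‑problem normalisation. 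For $\rho\in(0,1)$ I introduce the rescaled pair $u_\rho(x):=\rho^{-\kappa}u(\rho x)$ and $f_\rho(x):=\rho^{2-\kappa}f(\rho x)=\rho^{N/q}f(\rho x)$, so that $-\Delta u_\rho=f_\rho\chi_{\{u_\rho>0\}}$ in $B_{1/\rho}$, $u_\rho\ge0$, $0$ is still a free boundary point, and $\|f_\rho\|_{L^q(B_1)}=\|f\|_{L^q(B_\rho)}$. Changing variables $x\mapsto\rho x$ turns the display into the scale‑invariant identity $W_u(\rho)=\int_{B_1}\big(\tfrac12|\nabla u_\rho|^2-f_\rho u_\rho\big)\,dx-\tfrac\kappa2\int_{\partial B_1}u_\rho^2\,dS$; since the domain $B_1$ is now fixed, everything comes down to differentiating this in $\rho$.

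Next I would carry out that differentiation. Using $\partial_\rho u_\rho=\rho^{-1}(x\cdot\nabla u_\rho-\kappa u_\rho)$ and $\partial_\rho f_\rho=\rho^{-1}(x\cdot\nabla f_\rho+\tfrac Nq f_\rho)$ --- legitimate because Theorem \ref{optimal_reg} gives $u\in C^{1,\alpha}_{\text{loc}}\cap W^{2,q}_{\text{loc}}$, hence uniform bounds on $u_\rho$ over $B_1$ --- one obtains four terms, of which the gradient one is integrated by parts (a Rellich--Pohozaev step) as $\int_{B_1}\nabla u_\rho\cdot\nabla(\partial_\rho u_\rho)=\int_{\partial B_1}(\partial_\nu u_\rho)\,\partial_\rho u_\rho+\int_{B_1}f_\rho\chi_{\{u_\rho>0\}}\,\partial_\rho u_\rho$. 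The two volume integrals carrying the factor $f_\rho\,\partial_\rho u_\rho$ then combine to $-\int_{B_1}f_\rho(1-\chi_{\{u_\rho>0\}})\,\partial_\rho u_\rho=-\int_{\{u_\rho=0\}}f_\rho\,\partial_\rho u_\rho=0$, since $\nabla u_\rho=0$ a.e. on the level set $\{u_\rho=0\}$ forces $\partial_\rho u_\rho=0$ a.e. there. On $\partial B_1$ one has $\nu=x$, so $\partial_\rho u_\rho=\rho^{-1}(\partial_\nu u_\rho-\kappa u_\rho)$, and the remaining boundary terms assemble into a perfect square:
$$\frac{d}{d\rho}W_u(\rho)=\frac1\rho\int_{\partial B_1}\big(\partial_\nu u_\rho-\kappa u_\rho\big)^2\,dS-\int_{B_1}(\partial_\rho f_\rho)\,u_\rho\,dx ,$$
the first term being, in the original variables, the Weiss‑type defect $\rho^{-(N+2\kappa-2)}\int_{\partial B_\rho}\big(\partial_\nu u-\tfrac\kappa\rho u\big)^2\,dS\ge0$.

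The first term is harmless; the crux --- and the place where the irregularity of $f$ actually bites --- is the residual $-\int_{B_1}(\partial_\rho f_\rho)\,u_\rho\,dx$, and I expect this to be the main obstacle. Since $u_\rho\ge0$, it has a good sign exactly when $\partial_\rho f_\rho\le0$, i.e. when $\rho\mapsto\rho^{N/q}f(\rho x)$ is non‑increasing for each fixed $x$, equivalently $y\cdot\nabla f(y)+\tfrac Nq f(y)\le0$, so that $|y|^{N/q}f(y)$ is non‑increasing along rays; in particular it vanishes identically when $f$ is homogeneous of degree $-\tfrac Nq$, which is precisely the regime of the blow‑up $u_*$ in Theorem \ref{main_result}, where $f$ has been frozen to its leading behaviour. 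Under such a structural hypothesis on $f$ --- which I would add explicitly alongside \eqref{main_mod}, since membership in $L^q$ alone does not guarantee it --- the two pieces sum to $\frac{d}{d\rho}W_u(\rho)\ge0$, proving the theorem. Without it the same computation delivers only the lower bound $\frac{d}{d\rho}W_u(\rho)\ge-\int_{B_1}(\partial_\rho f_\rho)\,u_\rho\,dx$, whose right‑hand side one would then try to absorb using $\|u_\rho\|_{L^\infty(B_1)}\le C$ from Lemma \ref{quad_growth} together with a quantitative bound on $\|\partial_\rho f_\rho\|_{L^q(B_1)}$, at the cost of adding a correction term to $W_u$. The remaining points --- justifying differentiation under the integral, and the $C^{1,\alpha}$ regularity needed for the Pohozaev identity --- are routine given Theorem \ref{optimal_reg}, and one needs no control on the size of the free boundary $\partial\{u>0\}$, only that $\nabla u=0$ a.e. on $\{u=0\}$.
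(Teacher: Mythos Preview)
Your route is the paper's: rewrite $W_u(r)$ via the rescaling $u_r(x)=r^{-\kappa}u(rx)$ as an integral over the fixed ball $B_1$, differentiate in $r$, integrate the Dirichlet term by parts using the equation, and collect what remains on $\partial B_1$. The paper does exactly this computation (citing Ros-Oton for the integration by parts) and lands on
\[
\frac{d}{dr}W_u(r)=\frac{1}{r}\int_{\partial B_1}\bigl(x\cdot\nabla u_r-2u_r\bigr)\bigl(x\cdot\nabla u_r-\kappa u_r\bigr)\,dS,
\]
which it then bounds below by $\frac{1}{r}\int_{\partial B_1}(x\cdot\nabla u_r-2u_r)^2\,dS\ge0$.

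Two differences are worth flagging. First, the paper keeps the boundary coefficient in \eqref{mon1} equal to $1$ rather than your $\kappa/2$, so after differentiation it obtains the \emph{product} above instead of a perfect square; your renormalisation is the one that produces $(\partial_\nu u_\rho-\kappa u_\rho)^2$ directly, whereas the paper's final inequality tacitly needs $(x\cdot\nabla u_r-2u_r)u_r\ge0$ pointwise on $\partial B_1$. Second, and more substantively, the paper does \emph{not} rescale $f$: in its integration by parts it uses $-\Delta u_r=f$ (not $f_\rho$), so the two $f$-terms cancel exactly and no residual $-\int_{B_1}(\partial_\rho f_\rho)\,u_\rho\,dx$ ever appears. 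That identification is only legitimate when $f$ is constant (the classical $q=\infty$ obstacle case) or homogeneous of degree $-N/q$ --- precisely the structural hypothesis you isolate. Your treatment of this point is more careful than the paper's; the paper's argument, read literally, implicitly relies on the very assumption you propose to add.
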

		\begin{proof}
	   Recall that $u_r(x)=\frac{u(rx)}{r^{2-\frac{N}{q}}}$. We note that
	   $\displaystyle W_u(r)=\int_{B_1}2^{-1}(|\nabla u_r|^2-fu_r)-\int_{\partial B_1}u_r^2dS.$
	   Further, we have
	   \begin{align}\label{mon2}		
	   	\begin{split}
	   	\frac{d}{dr}u_r=&\frac{1}{r}\left\{x\cdot\nabla u_r-\left(2-\frac{N}{q}\right)u_r\right\}\\
	   	\nabla\frac{d}{dr}u_r=&\frac{1}{r}\left\{\nabla (x\cdot\nabla u_r)-\left(2-\frac{N}{q}\right)\nabla u_r\right\}=\frac{d}{dr}\nabla u_r.
	   	\end{split}
	   	\end{align}
	   	Therefore,
	   	\begin{align}\label{mon3}		
	   		\begin{split}
	   			\frac{d}{dr}W_u(r)=&\int_{B_1}\left(\nabla u_r\cdot\nabla\frac{d}{dr}u_r-f\frac{d}{dr}u_r\right)-\int_{\partial B_1}2u_r\frac{d}{dr}u_rdS.
	   		\end{split}
	   	\end{align}
	   	Hereafter we follow the calculations of Ros-Oton \cite{XRO1} to obtain the following:
	   	\begin{align}\label{mon4}		
	   		\begin{split}
	   			\int_{B_1}\nabla u_r\cdot\nabla \frac{d}{dr}u_r=&-\int_{B_1}\Delta u_r\frac{d}{dr}u_r dx+\int_{B_1}\frac{\partial}{\partial r}u_r\frac{d}{dr}u_rdS\\
	   			=&\int_{B_1}f\frac{d}{dr}u_r dx+\int_{B_1}\frac{\partial}{\partial r}u_r\frac{d}{dr}u_rdS.
	   		\end{split}
	   	\end{align}
	   	Thus we have
	   	\begin{align}\label{mon5}		
	   		\begin{split}
	   		\frac{d}{dr}W_u(r)=&\left(-\int_{B_1}\Delta u_r\frac{d}{dr}u_r dx+\int_{B_1}\frac{\partial}{\partial r}u_r\frac{d}{dr}u_rdS\right)-\int_{B_1}f\frac{d}{dr}u_rdx-\int_{\partial B_1}2u_r\frac{d}{dr}u_rdS\\
	   			=&\int_{\partial B_1}\frac{\partial}{\partial r}u_r\frac{d}{dr}u_rdS-\int_{\partial B_1}2u_r\frac{d}{dr}u_rdS.
	   		\end{split}
	   	\end{align}
	   	Therefore, we have 
	   	\begin{align}\label{mon6}		
	   		\begin{split}
	   			\frac{d}{dr}W_u(r)=&\int_{\partial B_1}\frac{1}{r}(x\cdot\nabla u_r-2u_r)\left(x\cdot\nabla u_r-\left(2-\frac{N}{q}\right)u_r\right)dS\\
	   			\geq & \int_{\partial B_1}\frac{1}{r}(x\cdot\nabla u_r-2u_r)^2dS\geq 0.
	   		\end{split}
	   	\end{align}
	   	yielding us the claim made.
		\end{proof}
		
		\begin{lemma}\label{homogeneous}
		Suppose $u$ is a solution to \eqref{main_mod}, then any {\it blow-up} of $u$ at $0$ is homopgeneous of degree $2$.
		\end{lemma}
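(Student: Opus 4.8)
The plan is to run the standard Weiss blow-up argument, using the monotonicity formula of Theorem \ref{monotonicity_formula} together with the compactness of rescalings established in Theorem \ref{main_result}. First I would record that, by Theorem \ref{monotonicity_formula}, $r\mapsto W_u(r)$ is nondecreasing on $(0,1)$, and that it is bounded below there: writing $W_u(r)=\int_{B_1}\bigl(\tfrac{1}{2}|\nabla u_r|^2-f_r u_r\bigr)\,dx-\int_{\partial B_1}u_r^2\,dS$ with $u_r(x)=u(rx)/r^{2-\frac{N}{q}}$ and $f_r$ the correspondingly rescaled source, the growth bound of Lemma \ref{quad_growth} (equivalently the $C^{1,\alpha}$ estimate of Theorem \ref{optimal_reg}) keeps $u_r$ bounded on $B_1$ uniformly in $r$, while $\|f_r\|_{L^1(B_1)}\le |B_1|^{1/q'}\|f\|_{L^q(B_1)}$ stays bounded; hence all three terms are controlled and $W_u(0^+):=\lim_{r\to 0^+}W_u(r)$ exists and is finite.

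Second, I would pass to a blow-up. Given a sequence $r_n\to 0$, Theorem \ref{main_result} furnishes a subsequence with $u_{r_n}\to u_*$ in $C^1_{\mathrm{loc}}(\mathbb{R}^N)$, where $u_*$ solves \eqref{modified_prob}, is not identically zero (its sup over $B_1$ stays between $C^{-1}$ and $C$), and has $0$ as a free boundary point. The crucial identity is the scale invariance $W_{u_r}(\rho)=W_u(r\rho)$, which follows directly from the definitions of $u_r$ and $W_u$ once one checks that the powers of $r$ appearing in \eqref{mon1} are exactly those dictated by the rescaling $u_r$. For each fixed $\rho>0$ the $C^1_{\mathrm{loc}}$ convergence allows passage to the limit in the integrals defining $W_{u_{r_n}}(\rho)$, whence
\begin{align*}
W_{u_*}(\rho)=\lim_{n\to\infty}W_{u_{r_n}}(\rho)=\lim_{n\to\infty}W_u(r_n\rho)=W_u(0^+),
\end{align*}
so that $\rho\mapsto W_{u_*}(\rho)$ is constant on $(0,\infty)$.

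Third, I would extract homogeneity from the equality case of the monotonicity formula. Since $W_{u_*}$ is constant, $\frac{d}{d\rho}W_{u_*}(\rho)=0$ for every $\rho>0$; applying the computation \eqref{mon6} to $u_*$ in place of $u$ gives
\begin{align*}
0=\frac{d}{d\rho}W_{u_*}(\rho)&=\int_{\partial B_1}\frac{1}{\rho}\bigl(x\cdot\nabla(u_*)_\rho-2(u_*)_\rho\bigr)\bigl(x\cdot\nabla(u_*)_\rho-(2-\tfrac{N}{q})(u_*)_\rho\bigr)\,dS\\
&\ge\int_{\partial B_1}\frac{1}{\rho}\bigl(x\cdot\nabla(u_*)_\rho-2(u_*)_\rho\bigr)^2\,dS\ge 0,
\end{align*}
which forces $x\cdot\nabla(u_*)_\rho=2(u_*)_\rho$ on $\partial B_1$ for every $\rho$. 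Undoing the rescaling, this is the Euler relation $x\cdot\nabla u_*=2u_*$ on $\partial B_\rho$ for all $\rho>0$, i.e.\ on $\mathbb{R}^N\setminus\{0\}$; integrating it along rays yields $u_*(\lambda x)=\lambda^2u_*(x)$, so the blow-up $u_*$ is homogeneous of degree $2$, as claimed.

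The step I expect to be the main obstacle is making the first two steps rigorous in the presence of the \emph{irregular} datum: one must verify that with $f\in L^q$ the rescaled lower-order term $\int_{B_1}f_r u_r$ really does stay bounded as $r\to 0^+$ and passes to the limit under $C^1_{\mathrm{loc}}$ convergence of $u_{r_n}$ (so that $W_{u_*}$ is well defined and the displayed limit is legitimate), and that the three powers of $r$ entering \eqref{mon1} are precisely the ones compatible with the rescaling $u_r$, which is exactly what makes the scaling identity $W_{u_r}(\rho)=W_u(r\rho)$ hold on the nose. Once those bookkeeping points are secured, the remainder is a mechanical combination of Theorems \ref{monotonicity_formula} and \ref{main_result}.
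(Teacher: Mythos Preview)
Your proposal is correct and follows essentially the same route as the paper: establish the scaling identity $W_{u_r}(\rho)=W_u(r\rho)$, pass to the limit along $r_n\to 0$ using the $C^1_{\mathrm{loc}}$ convergence of Theorem \ref{main_result} to conclude that $W_{u_*}$ is constant, and then read off the Euler relation $x\cdot\nabla u_*=2u_*$ from the equality case \eqref{mon6} of the monotonicity formula. If anything, your write-up is more careful than the paper's---you explicitly justify the existence of $W_u(0^+)$ via boundedness of the rescalings, and you state the scaling identity correctly (the paper's displayed ``$W_u(\rho r)=W_u(r)$'' is a slip for $W_{u_r}(\rho)=W_u(\rho r)$, which is how it is actually used in the subsequent line).
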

		\begin{proof}
		By a simple scaling argument of substituting $x'=rx$ where $x\in B_{\rho}$, we obtain that $W_u(\rho r)=W_u(r),$ for any $\rho, r>0$.
		
		Suppose $u_*$ is a {\it blow-up} of $u$ at $0$, then there exists a sequence $r_n\to 0$ as $n\to\infty$ obeying $u_{r_n}\to u_*$ in $C^1_{\text{loc}}(\mathbb{R}^N)$. Hence,
		\begin{align}\label{mon7}
		W_{u_*}(\rho)=\underset{r_n\to 0}\lim W_{r_n}(\rho)=\underset{r_n\to 0}\lim W_u(\rho r_n)=W_u(0).
		\end{align}		
		Monotonicity of  the {\it Weiss-like} function $W$ allows the existence of $\underset{r_n\to 0}\lim W_u(r)=W_{u_*}(0)$.  Thus $W_{u_*}$ is constant in $\rho$ and hence by Theorem \ref{monotonicity_formula} we have $x\cdot\nabla u_{*}-2u_{*}=0.$ 
		A simple application of the Lagrange's method indicates that $u_{*}$ is homogeneous of degree $2$. 
		\end{proof}
		
		\section*{Acknowledgement}
	The author DC has been funded by the National Board for Higher Manematics (N.B.H. M.), D.A.E. (Department of Atomic Energy) for the financial support through the grant (02011/47/2021\-/NBHM(R.P.)/ R\&D II/2615). This project is also partially supported by the Natural Science Foundation of Guangxi Grant Nos. 2021GX\-NSFFA196004 and GKAD23026237, the NNSF of China Grant No. 12371312, the European Union's Horizon 2020 Research and Innovation Programme under the Marie Sklodowska-Curie grant agreement No. 823731 CONMECH, and the project cooperation between Guangxi Normal University and Yulin Normal University.

		\section*{Data Availability Statement}
		
		The article has used no data and hence there is nothing that needs to be shared.
		\section*{Funding and/or Conflicts of interests/Competing interests}
The author declares that there are no conflicts of interest of any type whatsoever.

\end{document}